\titleformat{\chapter}[display]
{\normalfont\huge\bfseries}{\chaptertitlename\\thechapter}{20pt}{\Huge}
\titleformat{\subsubsection}[runin]
{\normalfont\normalsize\bfseries}{\thesubsubsection}{1em}{}
\titleformat{\paragraph}[runin]
{\normalfont\normalsize\bfseries}{\theparagraph}{1em}{}
\titleformat{\subparagraph}[runin]
{\normalfont\normalsize\bfseries}{\thesubparagraph}{1em}{}
\titlespacing*{\chapter} {0pt}{50pt}{40pt}
\titlespacing*{\section} {0pt}{3.5ex plus 1ex minus .2ex}{2.3ex plus .2ex}
\titlespacing*{\subsection} {0pt}{3.25ex plus 1ex minus .2ex}{1.5ex plus .2ex}
\titlespacing*{\subsubsection}{0pt}{3.25ex plus 1ex minus .2ex}{1.5ex plus .2ex}
\titlespacing*{\paragraph} {0pt}{3.25ex plus 1ex minus .2ex}{1em}
\titlespacing*{\subparagraph} {\parindent}{3.25ex plus 1ex minus .2ex}{1em}
\newtheorem{theorem}{Theorem}[section]
\newtheorem{proposition}[theorem]{Proposition}
\newtheorem{corollary}[theorem]{Corollary}
\theoremstyle{definition}
\newtheorem{definition}[theorem]{Definition}
\theoremstyle{remark}
\newtheorem{remark}[theorem]{Remark}
\newtheorem{question}[theorem]{Question}
\newtheorem{answer}[theorem]{Answer}
\DeclareMathOperator{\Jac}{Jac}
\DeclareMathOperator{\Ima}{Im}
\begin{document}
\title{There are no Keller maps having prime degree field extensions}

\author{Vered Moskowicz}
\address{Department of Mathematics, Bar-Ilan University, Ramat-Gan 52900, Israel.}
\email{vered.moskowicz@gmail.com}
\subjclass[2010]{Primary 14R15; Secondary 13F20}
\keywords{Jacobian Conjecture, Keller maps}

\begin{abstract}
The two-dimensional Jacobian Conjecture says that a Keller map $f: (x,y) \mapsto (p,q) \in k[x,y]^2$, 
$\Jac(p,q) \in k^*$, is an automorphism of $k[x,y]$.
We prove that there is no Keller map with $[k(x,y): k(p,q)]$ prime.
\end{abstract}

\maketitle

\section{Introduction}

The two-dimensional Jacobian Conjecture, raised by O. H. Keller \cite{keller}, says that a Keller map 
$f: (x,y) \mapsto (p,q) \in k[x,y]^2$, 
$\Jac(p,q) \in k^*$, is an automorphism of $k[x,y]$, namely, $k[p,q]=k[x,y]$.

Nice sources on the Jacobian Conjecture are \cite{essen book}, \cite{essen believe} and \cite{bcw}.

We prove that there is no Keller map with $[k(x,y): k(p,q)]$ prime.
Our proof is divided into two cases $xy \notin k[p,q]$, Theorem \ref{my1}, 
and $xy \in k[p,q]$, Theorem \ref{my2};
in each case we show that the assumption that $[k(x,y):k(p,q)]$ is prime implies that
$f$ is an automorphism, $k[p,q]=k[x,y]$, hence $k(p,q)=k(x,y)$, so $[k(x,y):k(p,q)]=1$, 
and there is no such map. 

In our proof we rely on several known results and on an answer to one of our questions in MO \cite{mo}.

\section{Known results}

In this section we recall known results that are used in our proof;
without one or more of those results it seems that we would not have been able to prove our result.
Also, we present our MO question.

Throughout this note, $k=\mathbb{C}$ and $f: (x,y) \mapsto (p,q) \in k[x,y]^2$ is a Keller map, namely, 
$\Jac(p,q)=p_xq_y-p_yq_x \in k^*$.

\subsection{Formanek's results} 

\cite[Theorem 1]{formanek two notes}:
\begin{theorem}\label{formanek1}
If $k[p,q][w]=k[x,y]$, for some $w \in k[x,y]$, then $f$ is an automorphism,
namely, $k[p,q]=k[x,y]$.
\end{theorem}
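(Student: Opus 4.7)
Set $A = k[p,q]$, $B = k[x,y]$, and $n := [k(x,y):k(p,q)]$. The plan is to package the hypothesis as an explicit presentation of $B$ over $A$ and then use the Keller condition to force $n = 1$, from which $w \in A$ and $B = A$ follow at once.

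Since $\Jac(p,q) \in k^*$, the elements $p,q$ are algebraically independent, so $A$ is a polynomial ring in two variables, in particular a UFD. The $A$-algebra surjection $A[T] \twoheadrightarrow B$, $T \mapsto w$, has a height-one prime kernel in $A[T]$, hence is generated by a single primitive irreducible polynomial $g(T) = a_n T^n + \cdots + a_0 \in A[T]$; by Gauss's lemma, $g/a_n$ is (up to a $k(p,q)$-scalar) the minimal polynomial of $w$ over $k(p,q)$, so $\deg_T g = n$.

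The Keller condition is exploited via K\"ahler differentials. The identity $dp \wedge dq = \Jac(p,q)\, dx \wedge dy$ shows $\Omega_{B/A} = 0$, while the presentation $B \cong A[T]/(g)$ identifies $\Omega_{B/A}$ with $B/(g'(w))$. Hence $g'(w) \in B^*$, and since $B^* = k^*$, one has $g'(w) = c$ for some $c \in k^*$. Reading this as
\[ n a_n w^{n-1} + (n-1)a_{n-1} w^{n-2} + \cdots + 2 a_2 w + (a_1 - c) = 0, \]
a $k[p,q]$-linear relation in $1,w,\ldots,w^{n-1}$, and using that these powers are $k(p,q)$-linearly independent (as $w$ has degree $n$ over $k(p,q)$), every coefficient must vanish. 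In characteristic $0$, the relation $n a_n = 0$ forces $a_n = 0$, which contradicts $\deg_T g = n$ unless $n = 1$; and for $n = 1$ one reads off $w = -a_0/c \in A$, so $B = A$, as desired.

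The crux should be the K\"ahler-differential computation: it is the one-generator hypothesis that makes $\Omega_{B/A}$ cleanly isomorphic to $B/(g'(w))$, and $B^* = k^*$ is what lets the resulting unit condition collapse to a scalar equation; once that lever is in place the linear-independence count is essentially forced.
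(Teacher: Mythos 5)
Your proof is correct. Note that the paper does not prove this statement at all --- it is quoted verbatim as \cite[Theorem 1]{formanek two notes}, so there is no in-paper argument to compare against; what you have written is a sound reconstruction of the standard proof of Formanek's theorem. All the key steps check out: $A=k[p,q]$ is a polynomial ring (hence $A[T]$ a UFD and the height-one kernel principal), $k(p,q)(w)=k(x,y)$ gives $\deg_T g=n$, the Keller condition kills $\Omega_{B/A}$ via the first fundamental sequence, the conormal sequence for $B\cong A[T]/(g)$ turns this into $g'(w)\in B^*=k^*$, and the linear independence of $1,w,\dots,w^{n-1}$ over $k(p,q)$ together with $\operatorname{char}k=0$ forces $na_n=0$, hence $n=1$ and $w\in A$. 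The only cosmetic remark is that $g/a_n$, being monic, \emph{is} the minimal polynomial (no extra scalar needed), and that the identification $k(p,q)(w)=k(x,y)$ deserves the one-line justification of taking fraction fields in $k[p,q][w]=k[x,y]$.
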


\begin{corollary}\label{cor formanek1}
If $k[p,q][x]=k[x,y]$, then $k[p,q]=k[x,y]$
and 
if $k[p,q][y]=k[x,y]$, then $k[p,q]=k[x,y]$.
\end{corollary}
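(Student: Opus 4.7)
The corollary is an immediate specialization of Theorem \ref{formanek1}. The plan is to apply the theorem twice, once with $w = x$ and once with $w = y$.

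First I would observe that both $x$ and $y$ are elements of $k[x,y]$, so they are admissible choices for the element $w$ in the statement of Theorem \ref{formanek1}. Under the hypothesis $k[p,q][x] = k[x,y]$, taking $w = x$ in Theorem \ref{formanek1} yields that $f$ is an automorphism, i.e. $k[p,q] = k[x,y]$. Symmetrically, under the hypothesis $k[p,q][y] = k[x,y]$, taking $w = y$ gives the same conclusion.

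There is no genuine obstacle here; the corollary is essentially just naming two convenient instances of Formanek's theorem that will be invoked later in the paper (presumably in the two cases $xy \notin k[p,q]$ and $xy \in k[p,q]$). The only thing to verify is the trivial membership $x, y \in k[x,y]$, so the proof reduces to a one-line citation of Theorem \ref{formanek1}.
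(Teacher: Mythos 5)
Your proof is correct and is exactly the paper's argument: the paper's entire proof reads ``Take $w=x$ or $w=y$,'' which is the same two-instance specialization of Theorem \ref{formanek1} that you describe. Nothing further is needed.
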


\begin{proof}
Take $w=x$ or $w=y$.
\end{proof}

The following result is already written in the form needed for us;
we do not need its more general form for more then two variables.

\cite[Theorem 2]{formanek observations}:
\begin{theorem}\label{formanek2}
$k(p,q,x)=k(x,y)$ and $k(p,q,y)=k(x,y)$.
\end{theorem}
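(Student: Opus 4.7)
\medskip
\noindent\textbf{Proof plan.} By the symmetry $x\leftrightarrow y$ it suffices to establish $k(p,q,x)=k(x,y)$. Since $\Jac(p,q)\in k^*$, $p$ and $q$ are algebraically independent over $k$, so $k(x,y)/k(p,q,x)$ is finite algebraic; let $d$ be its degree. The goal is to show $d=1$. Viewing $k(x,y)=k(x)(y)$ as a purely transcendental extension of $k(x)$ of transcendence degree one, I would apply L\"uroth's theorem to the intermediate field $k(p,q,x)=k(x)(p,q)$: since the Jacobian condition forbids $p,q\in k(x)$, we obtain $k(p,q,x)=k(x)(g)$ for some $g\in k(x)(y)$ whose degree as a rational function of $y$ over $k(x)$ equals $d$.

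Next, write $p=F(x,g)$ and $q=G(x,g)$ with $F,G\in k(x)(u)$. Applying $\partial/\partial x$ and $\partial/\partial y$ via the chain rule yields
\[
\Jac(p,q)=p_xq_y-p_yq_x=\bigl(F_xG_u-F_uG_x\bigr)(x,g)\cdot g_y.
\]
The factor $F_xG_u-F_uG_x$ is nonzero in $k(x)(u)$: otherwise $F,G$ would be algebraically dependent over $k$, and then so would $p=F(x,g)$ and $q=G(x,g)$, contradicting $\Jac(p,q)\in k^*$. Since $g$ is transcendental over $k(x)$, evaluation at $u=g$ is injective, so $(F_xG_u-F_uG_x)(x,g)\neq 0$ in $k(x)(g)$. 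Solving for $g_y$ therefore shows $g_y\in k(x)(g)$; this is the essential use of the Jacobian hypothesis.

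Consequently the derivation $D:=\partial/\partial y$ on $k(x,y)$ (trivial on $k(x)$, with $D(y)=1$) restricts to a derivation of $k(x)(g)$. Let $\mu(T)=T^d+c_{d-1}T^{d-1}+\cdots+c_0\in k(x)(g)[T]$ be the monic minimal polynomial of $y$ over $k(x)(g)$. Applying $D$ to $\mu(y)=0$ yields $\mu'(y)+\mu_D(y)=0$, where $\mu'$ is the $T$-derivative of $\mu$ and $\mu_D$ is obtained by applying $D$ to each coefficient (this stays in $k(x)(g)[T]$ by the previous step). Since $\mu'+\mu_D\in k(x)(g)[T]$ has degree at most $d-1$ yet vanishes at $y$, minimality of $\mu$ forces $\mu'+\mu_D\equiv 0$.

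Comparing coefficients of $T^{d-1}$ gives $D(c_{d-1})=-d$, i.e., $\partial c_{d-1}/\partial y=-d$. As the field of $\partial/\partial y$-constants in $k(x,y)$ is $k(x)$, this integrates to $c_{d-1}=-dy+\psi(x)$ for some $\psi\in k(x)$. But $c_{d-1}\in k(x)(g)$, so $y\in k(x)(g)=k(p,q,x)$, forcing $d=1$. The main obstacle is the step $g_y\in k(x)(g)$: without the Jacobian condition it fails, and the theorem is genuinely false in general (for instance, $p=y^2$, $q=0$ gives $k(p,q,x)=k(x,y^2)\subsetneq k(x,y)$).
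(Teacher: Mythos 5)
Your argument is correct. Note that the paper itself offers no proof of this statement: it is quoted verbatim from Formanek's \emph{Observations about the Jacobian Conjecture} (Theorem 2 there), so you are supplying a self-contained argument where the paper only cites one. Your route --- L\"uroth over $k(x)$ to write $k(p,q,x)=k(x)(g)$, the identity $\Jac(p,q)=(F_xG_u-F_uG_x)(x,g)\,g_y$ to show $\partial/\partial y$ restricts to a derivation of $k(x)(g)$, and then the coefficient of $T^{d-1}$ in the differentiated minimal polynomial to extract $y$ --- is sound, and it is close in spirit to Formanek's own derivation-on-the-minimal-polynomial argument, though his proof works in $n$ variables with a Jacobian-type derivation rather than via L\"uroth. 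Each step checks out: not both $p,q$ lie in $k(x)$ (else $\Jac(p,q)=0$), so L\"uroth applies; the vanishing of $F_xG_u-F_uG_x$ would make $F,G$, hence $p,q$, algebraically dependent over $k$ (this is where characteristic zero enters); the $\partial/\partial y$-constants of $k(x,y)$ are exactly $k(x)$; and $d\neq 0$ in $k$ lets you divide at the end. What your approach buys is a transparent two-variable proof requiring only L\"uroth and the Jacobian criterion for algebraic dependence; what it costs is that, unlike Formanek's statement, it does not directly generalize to $k(F_1,\dots,F_n,X_1,\dots,X_{n-1})=k(X_1,\dots,X_n)$ for $n>2$, since L\"uroth fails in higher transcendence degree. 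For the purposes of this paper, where only the two-variable case is used, that is no loss.
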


\begin{remark}\label{remark formanek}
Actually, if $w=g(x)$ for some automorphism $g$ of $k[x,y]$,
then $k(p,q,w)=k(x,y)$, see the discussion in \cite[page 370]{formanek observations}.
For example, $k(p,q,x+y)=k(x,y)$.
\end{remark}

\subsection{Wang's result}

\cite[Theorem 41(i)]{wang}:
\begin{theorem}\label{flat}
$k(p,q) \cap k[x,y] = k[p,q]$.
\end{theorem}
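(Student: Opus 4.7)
The inclusion $k[p,q]\subseteq k(p,q)\cap k[x,y]$ is trivial, so only the reverse needs work. The plan is: take $h\in k(p,q)\cap k[x,y]$ and, since $k[p,q]$ is a UFD (being a polynomial ring in two variables), write $h=a/b$ with $a,b\in k[p,q]$ coprime; the statement reduces to showing $b\in k^*$, which I would prove by contradiction.

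Assume $b$ is a non-unit and pick an irreducible factor $\beta\in k[p,q]$ of $b$. From $bh=a$ in $k[x,y]$ one gets $\beta\mid a$ in $k[x,y]$. Passing to the geometric picture, let $C$ be the irreducible affine curve $\{\beta=0\}$ in the target plane (coordinates $p,q$) and let $\wt C=\{(x,y):\beta(p(x,y),q(x,y))=0\}$ be its preimage under $f$ in the source plane (coordinates $x,y$). Two observations: (i) $\wt C$ is nonempty, since $\Jac(p,q)\in k^*$ forces $p,q$ to be algebraically independent over $k$ in characteristic zero, making $\beta(p(x,y),q(x,y))$ a nonconstant polynomial and hence supplying a zero by the Nullstellensatz; (ii) $a$ vanishes on $\wt C$, since $\beta(p,q)\mid a(p,q)$ in $k[x,y]$.

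The heart of the argument is to upgrade this to vanishing of $a$ on all of $C$, which then gives $\beta\mid a$ in $k[p,q]$ and contradicts the coprimality of $a$ and $b$ in $k[p,q]$. For this I would invoke the Keller condition: $\Jac(p,q)\in k^*$ makes $f$ \'etale everywhere, hence an open morphism; base-change to $C$ keeps the restriction $\wt C\to C$ \'etale, in particular open, so its image is a nonempty open subset of the irreducible curve $C$, thus dense. Vanishing of $a$ on this dense subset of $C$ extends to vanishing on all of $C$ by continuity/closedness of zero loci, completing the contradiction.

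The step I anticipate as the main obstacle is precisely this openness/density argument: it requires the full force of the Keller condition (\'etaleness of $f$ globally on the source, not merely at a generic point), together with the fact that \'etale morphisms of finite type are open and that openness survives base change to the divisor $C$. Everything else is formal bookkeeping inside the UFDs $k[p,q]$ and $k[x,y]$.
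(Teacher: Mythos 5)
Your argument is correct, but note that the paper itself offers no proof to compare against: Theorem \ref{flat} is quoted verbatim from Wang \cite[Theorem 41(i)]{wang}, so you have supplied a self-contained proof of a result the paper only cites. The skeleton is sound: $k[p,q]$ is a polynomial ring in two variables (algebraic independence of $p,q$ follows from $\Jac(p,q)\neq 0$ in characteristic zero), hence a UFD; writing $h=a/b$ with $a,b$ coprime there, an irreducible factor $\beta$ of $b$ would divide $a$ in $k[x,y]$; $\wt C=\{\beta(p,q)=0\}$ is nonempty because $\beta(p,q)$ is nonconstant; and your density step is legitimate, since $f$ is \'etale, \'etaleness is stable under base change, and \'etale morphisms are open, so $f(\wt C)$ is a nonempty open, hence dense, subset of the irreducible curve $C=V(\beta)$, forcing $a$ to vanish on $C$ and (Nullstellensatz plus $(\beta)$ prime) $\beta\mid a$ in $k[p,q]$, contradicting coprimality. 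Two remarks. First, you underestimate what you have when you call the openness step the ``main obstacle'': it can be replaced by something much cheaper. All you need is that $f(\wt C)$ is an \emph{infinite} subset of $C$, because a proper Zariski-closed subset of an irreducible curve is finite; and $f(\wt C)$ is infinite since $\wt C$ is infinite (zero set of a nonconstant polynomial in $\mathbb{C}^2$) while the fibers of $f$ are finite (a positive-dimensional fiber would give $p,q$ a common nonconstant factor after translation, contradicting $\Jac(p,q)\in k^*$ exactly as in Proposition \ref{common zeros jc}). Second, a small notational point: where you say ``$a$ vanishes on $\wt C$'' you mean $a\circ f$ vanishes on $\wt C$, i.e.\ $a$ vanishes on $f(\wt C)\subseteq C$; your subsequent use makes clear this is what you intend.
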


\subsection{Results of Jedrzejewicz and Zieli\'{n}ski}    

\begin{definition}\label{def1}
Let $A$ be an integral domain, $R \subseteq A$ a subring.
$R \subseteq A$ is called \textit{square-factorially closed} in $A$ 
if the following condition is satisfied:
For $u \in A$ arbitrary and $v \in A$ square-free, if $u^2v \in R-\{0\}$, then $u,v \in R$.
\end{definition}

The following result inspired us to discover Theorem \ref{my1}.
\cite[Theorem 3.4]{nice}:
\begin{theorem}\label{root1}
Let $A$ be a UFD, $R \subseteq A$ a subring of $A$
such that $R^*=A^*$ (invertible elements) and $F(R) \cap A=R$,
where $F(R)$ is the field of fractions of $R$.

TFAE:
\begin{itemize}
\item The set of square-free elements of $R$ is contained in the set of square-free elements of $A$.
\item $R$ is square-factorially closed in $A$.
\end{itemize}
\end{theorem}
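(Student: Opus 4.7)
The plan is to prove the two implications separately. The direction ``square-factorially closed $\Rightarrow$ (1)'' is a short factorization argument in the UFD $A$, while the reverse goes by induction on the total number of prime factors of $r = u^2 v$ in $A$ counted with multiplicity.

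For (2) $\Rightarrow$ (1), I would take $r \in R$ square-free in $R$ and, using that $A$ is a UFD, write $r = u^2 v$ with $u \in A$ and $v \in A$ square-free. Square-factorial closedness then forces $u, v \in R$, and since $r$ is square-free in $R$ while $u^2 \mid r$ in $R$, the element $u$ must be a unit in $R$. Hence $u \in R^* = A^*$, so $u^2$ is a unit in $A$ and $r = u^2 v$ is square-free in $A$.

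For (1) $\Rightarrow$ (2), fix $u \in A$, $v \in A$ square-free, and $r = u^2 v \in R \setminus \{0\}$. Write $\Omega_A(r)$ for the number of prime factors of $r$ in $A$ counted with multiplicity, and induct on $\Omega_A(r)$. If $r \in A^*$, then $u, v \in A^* = R^* \subseteq R$. If $r$ is square-free in $A$ but not a unit, the shape $r = u^2 v$ with $v$ square-free forces $u \in A^* = R^*$, so $u \in R$, $u^{-1} \in R$, and $v = u^{-2} r \in R$. In the remaining case $r$ is not square-free in $A$; by the contrapositive of (1), $r$ is not square-free in $R$ either, so there is a non-unit $s \in R$ with $r = s^2 t$ for some $t \in R$. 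A short prime-by-prime argument in $A$, using $s^2 \mid u^2 v$ together with the fact that every exponent appearing in $v$ is at most one (so the even exponents of $s^2$ must be absorbed by $u^2$), shows that $s \mid u$ in $A$. Writing $u = s u'$ in $A$ yields $t = u'^2 v$ in $A$, with $\Omega_A(t) \leq \Omega_A(r) - 2 < \Omega_A(r)$ because $s$ is a non-unit, so the inductive hypothesis applied to $t$ gives $u', v \in R$. Then $v \in R$ and $u = s u' \in R$ as required.

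The main obstacle will be this last case of the induction: $R$ need not be a UFD, and a priori neither $u$ nor $v$ lies in $R$. The key trick is that the contrapositive of (1) produces an explicit witness $s \in R$ to the non-square-freeness of $r$ in $R$, and the UFD structure of $A$ then converts this into a divisibility $s \mid u$ in $A$ that strictly reduces $\Omega_A$, enabling the recursion. The hypothesis $R^* = A^*$ is used throughout the base and square-free cases to promote $A$-units to $R$-units, while $F(R) \cap A = R$ is implicit in the same identities (e.g.\ $v = u^{-2} r$) and would be the natural substitute if one preferred to argue $v \in F(R) \cap A$ directly.
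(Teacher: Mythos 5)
The paper offers no proof of this statement to compare against: Theorem \ref{root1} is quoted verbatim from Jedrzejewicz and Zieli\'{n}ski \cite[Theorem 3.4]{nice} and used as a black box. Judged on its own, your argument is correct and complete, and it is a genuinely self-contained alternative to citing the source. The direction from square-factorial closedness to (1) is fine once you add the standard remark that a square-free element of $R$ is nonzero, so that the hypothesis ``$u^2v\in R-\{0\}$'' applies to the decomposition $r=u^2v$ ($v$ square-free) taken in the UFD $A$; then $u\in R^*=A^*$ and $r$ is an associate of $v$ in $A$. The converse by induction on $\Omega_A(r)$ also works: the negation of (1) yields a witness $r=s^2t$ with $s\in R\setminus R^*$ and $t\in R$, the valuation count $2v_\pi(s)+v_\pi(t)=2v_\pi(u)+v_\pi(v)$ with $v_\pi(v)\le 1$ forces $s\mid u$ in $A$, cancellation gives $t=(u/s)^2v$ with $\Omega_A(t)\le\Omega_A(r)-2$, and the recursion terminates at units, where $R^*=A^*$ finishes the job. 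Two small points to tighten. First, your proof never actually invokes $F(R)\cap A=R$; only $R^*=A^*$ and the UFD property of $A$ are used. Describing that hypothesis as ``implicit in $v=u^{-2}r$'' is misleading, since that identity needs only $u^{-2}\in A^*=R^*$ and $r\in R$. Either state plainly that this particular two-way equivalence does not need the hypothesis (the source theorem lists further equivalent conditions for which it is presumably required), or locate precisely where it would enter. Second, spell out the definition of ``square-free in $R$'' you are negating (no presentation $b^2c$ with $b$ a non-unit of $R$), since the existence of the witness $s$ is exactly what launches the induction.
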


Then we have:
\begin{theorem}\label{square-factorially closed}
$k[p,q]$ is square-factorially closed in $k[x,y]$.
\end{theorem}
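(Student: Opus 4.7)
The plan is to apply the equivalence in Theorem~\ref{root1}. First I would verify its hypotheses for $A=k[x,y]$ and $R=k[p,q]$: $A$ is a UFD, the unit groups satisfy $R^{*}=A^{*}=k^{*}$ since both are polynomial rings over $k$, and $F(R)\cap A=k(p,q)\cap k[x,y]=k[p,q]=R$ is precisely Wang's Theorem~\ref{flat}. It then suffices to verify the first of the two equivalent properties, namely that every $h\in k[p,q]$ which is square-free in $k[p,q]$ remains square-free in $k[x,y]$.

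I would use the usual characteristic-zero criterion: a polynomial in two variables over $k$ is square-free in its ambient UFD if and only if the gcd of the polynomial with its two partial derivatives is a unit. So, given $h\in k[p,q]$ square-free in $k[p,q]$, the goal becomes to show $\gcd_{k[x,y]}(h,h_x,h_y)=1$. Assume the contrary and let $u\in k[x,y]$ be an irreducible common divisor. The chain rule applied to $h=h(p(x,y),q(x,y))$ gives
\[
\begin{pmatrix}h_x\\ h_y\end{pmatrix}=\begin{pmatrix}p_x & q_x\\ p_y & q_y\end{pmatrix}\begin{pmatrix}h_p\\ h_q\end{pmatrix},
\]
and the coefficient matrix has determinant $\Jac(p,q)\in k^{*}$, so it is invertible over $k[x,y]$; inverting it displays $h_p,h_q$ as $k[x,y]$-linear combinations of $h_x,h_y$, whence $u$ also divides $h_p$ and $h_q$ in $k[x,y]$.

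The crucial step is descending this divisibility from $k[x,y]$ to $k[p,q]$. I would do this via the fact that the Keller map $\phi\colon\mathbb{A}^{2}_{x,y}\to\mathbb{A}^{2}_{p,q}$ has finite fibers: if a fiber $V(p-p_0,q-q_0)$ contained a curve, then at a smooth point of that curve the two gradients $\nabla(p-p_0)$ and $\nabla(q-q_0)$ would both lie in its one-dimensional normal line and hence be linearly dependent, forcing $\Jac(p,q)=0$ there, contrary to $\Jac(p,q)\in k^{*}$. Consequently $\phi|_{V(u)}$ is generically finite, its image closure $\overline{\phi(V(u))}\subseteq\mathbb{A}^{2}_{p,q}$ is an irreducible curve, and so the contracted prime $\mathfrak{p}:=(u)\cap k[p,q]$ has height one; since $k[p,q]$ is a UFD, $\mathfrak{p}=(g)$ for some non-unit irreducible $g\in k[p,q]$.

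Finally, $h,h_p,h_q\in(u)\cap k[p,q]=(g)$ means that the non-unit $g$ divides all three of $h,h_p,h_q$ in $k[p,q]$, contradicting the square-freeness of $h$ in $k[p,q]$. Hence $h$ is square-free in $k[x,y]$, condition~(1) of Theorem~\ref{root1} holds, and therefore $k[p,q]$ is square-factorially closed in $k[x,y]$. The main obstacle in this plan is the geometric descent step that bounds the height of $\mathfrak{p}$; flatness of the Keller map would furnish a cleaner proof, but the finite-fiber argument is self-contained given $\Jac(p,q)\in k^{*}$.
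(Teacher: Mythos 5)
Your proof is correct, and its outer shell coincides with the paper's: both apply Theorem~\ref{root1} after checking that $k[x,y]$ is a UFD, that $k[p,q]^{*}=k[x,y]^{*}=k^{*}$, and that $k(p,q)\cap k[x,y]=k[p,q]$ via Wang's Theorem~\ref{flat}. The difference lies in how the remaining condition --- that square-free elements of $k[p,q]$ remain square-free in $k[x,y]$ --- is obtained. The paper simply cites \cite[Theorem 2.4]{nice}, $(i)\Rightarrow(iii)$, which is precisely this statement for a Jacobian pair; you instead prove it from scratch, in effect re-proving that implication in the Keller setting. Your argument checks out: the characteristic-zero criterion (square-free iff coprime to both partials) is valid in both polynomial rings $k[x,y]$ and $k[p,q]\cong k[T_1,T_2]$; the chain rule together with the invertibility of $\bigl(\begin{smallmatrix}p_x & q_x\\ p_y & q_y\end{smallmatrix}\bigr)$ over $k[x,y]$ correctly upgrades a common irreducible divisor $u$ of $h,h_x,h_y$ to a common divisor of $h,h_p,h_q$; and the descent works because a Keller map is quasi-finite (your smooth-point/gradient argument is fine, or one can just say the map is \'etale), so $\overline{\phi(V(u))}$ is a curve, $(u)\cap k[p,q]$ is a nonzero non-maximal prime, hence of height one and principal in the UFD $k[p,q]$, producing the non-unit common divisor $g$ that contradicts square-freeness of $h$ in $k[p,q]$. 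What your route buys is self-containedness and transparency --- it isolates exactly which features of the Keller hypothesis ($\Jac(p,q)\in k^{*}$ and the resulting quasi-finiteness) drive the square-free preservation; what the paper's citation buys is brevity. One caution: do not be tempted to replace your geometric height-one step by an integrality or finiteness claim for $k[p,q]\subseteq k[x,y]$, since properness of a Keller map is not known (and would already imply the conjecture); the quasi-finiteness argument you give is the right level of generality.
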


\begin{proof}
We can apply Theorem \ref{root1}, since $k[x,y]$ is a UFD,
$k[p,q]^*=k[x,y]^*$ and $k(p,q) \cap k[x,y]=k[p,q]$ (Theorem \ref{flat})
and the set of square-free elements of $k[p,q]$ 
is contained in the set of square-free elements of $k[x,y]$
by \cite[Theorem 2.4]{nice}, $(i)$ implies $(iii)$.
\end{proof}

Theorem \ref{square-factorially closed} says: 
For $u \in k[x,y]$ arbitrary and $v \in k[x,y]$ square-free, 
if $u^2v \in k[p,q]-\{0\}$, then $u,v \in k[p,q]$; 
we will apply this property several times in the proof of Theorem \ref{my1}.

\begin{definition}\label{def2}
Let $A$ be an integral domain, $R \subseteq A$ a subring.
$R \subseteq A$ is called \textit{root closed} in $A$ 
if the following condition is satisfied:
For every $u \in A$ and $n \geq 1$, if $u^n \in R$, then $u \in R$.
\end{definition}

\cite[Theorem 3.6]{nice}:
\begin{theorem}\label{root2}
Let $A$ be a UFD, $R \subseteq A$ a subring of $A$
such that $R^*=A^*$ (invertible elements) and $F(R) \cap A=R$,
where $F(R)$ is the field of fractions of $R$.

If $R$ is square-factorially closed in $A$, then $R$ is root closed in $A$.
\end{theorem}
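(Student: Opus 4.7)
The plan is to induct on the total degree $d(u) = \sum_i e_i$ attached to the prime factorisation $u = \prod_i p_i^{e_i}$ of $u$ in the UFD $A$, with the convention $d(u) = 0$ when $u$ is a unit (or when $u=0$, a trivial case). The base case is immediate: if $d(u) = 0$, then $u \in A^* = R^* \subseteq R$.

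For the inductive step, fix $u \in A$ with $d(u) \geq 1$ and assume the conclusion for every $u' \in A$ with $d(u') < d(u)$. Given $n \geq 1$ with $u^n \in R$, I would first strip the powers of $2$ out of $n$: whenever $n = 2m$, the identity $u^n = (u^m)^2 \cdot 1$ with $1$ square-free lets square-factorial closedness yield $u^m \in R$. Iterating, we may assume $n$ is odd; the case $n = 1$ is trivial, so take $n \geq 3$.

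The key step uses the decomposition $u = w^2 v$ in $A$ with $v$ square-free, available in any UFD by collecting the even parts of the prime exponents of $u$ into $w^2$ and the odd parts into $v$. For $n$ odd,
\[
u^n \;=\; w^{2n} v^n \;=\; \bigl(w^n v^{(n-1)/2}\bigr)^2 \cdot v,
\]
with $v$ square-free, so square-factorial closedness forces both $v \in R$ and $a := w^n v^{(n-1)/2} \in R$. Squaring gives $a^2 = w^{2n} v^{n-1} \in R$, and because $v \in R$ is nonzero,
\[
w^{2n} \;=\; a^2 / v^{n-1} \;\in\; F(R) \cap A \;=\; R
\]
by the standing hypothesis. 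Since $d(u) = 2 d(w) + d(v) \geq 1$ forces $d(w) < d(u)$, the induction hypothesis applied to $w$ (with exponent $2n$) yields $w \in R$, and therefore $u = w^2 v \in R$.

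The delicate ingredient, I expect, is the extraction $w^{2n} \in R$: square-factorial closedness by itself produces only the mixed quantity $a = w^n v^{(n-1)/2}$, and it is precisely the auxiliary condition $F(R) \cap A = R$ that allows the division by $v^{n-1}$ to be carried out inside $R$. The other subtle choice is the induction variable: inducting on $n$ alone would fail because the argument trades $u^n$ for $w^{2n}$, a \emph{larger} exponent attached to a smaller element, so induction on $d(u)$ is the natural one.
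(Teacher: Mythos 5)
Your argument is correct and complete; note that the paper itself does not prove this statement at all --- it quotes it verbatim as \cite[Theorem 3.6]{nice} --- so your proof supplies something the paper omits rather than paralleling it. The induction on the length $d(u)$ of the factorisation, the square-free decomposition $u=w^2v$, and the use of $F(R)\cap A=R$ to divide $a^2$ by $v^{n-1}$ are exactly the right ingredients, and your closing remarks correctly identify why induction on $n$ alone would not terminate. The one place you lean on a convention is the reduction to odd $n$, where you apply square-factorial closedness to $u^{2m}=(u^m)^2\cdot 1$ with $v=1$: this requires units to count as square-free. That is indeed the convention of the cited source (an element is square-free if it is not of the form $b^2c$ with $b$ non-invertible), and for even $n$ the element $u^n$ is a square times a unit, so no application with a non-unit square-free part is available --- it is worth stating that convention explicitly, since the paper's Definition \ref{def1} never defines ``square-free.''
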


Then we have:
\begin{theorem}\label{root closed}
$k[p,q]$ is root closed in $k[x,y]$.
\end{theorem}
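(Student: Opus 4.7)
The plan is to simply invoke Theorem \ref{root2} with $A = k[x,y]$ and $R = k[p,q]$, since all the hypotheses have already been verified in the preceding subsections. No new ideas should be needed; the work has been packaged so that this theorem is a one-line corollary of Theorem \ref{square-factorially closed}.

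Concretely, I would check the four hypotheses of Theorem \ref{root2} in order. First, $A = k[x,y]$ is a UFD, which is classical. Second, $R^* = A^*$: here both $k[p,q]^*$ and $k[x,y]^*$ equal $k^*$ (for $k[x,y]$ this is standard, and for $k[p,q]$ it follows from the fact that $k[p,q]$ is a polynomial ring in two variables since $p,q$ are algebraically independent over $k$ by the Keller condition $\Jac(p,q) \in k^*$, so $k[p,q]^* = k^*$ as well). Third, $F(R) \cap A = R$, that is, $k(p,q) \cap k[x,y] = k[p,q]$, which is precisely Wang's Theorem \ref{flat}. Fourth, $R$ is square-factorially closed in $A$, which is exactly Theorem \ref{square-factorially closed}.

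With these four facts in hand, Theorem \ref{root2} applies directly and yields the conclusion that $k[p,q]$ is root closed in $k[x,y]$, i.e.\ for every $u \in k[x,y]$ and every $n \geq 1$, if $u^n \in k[p,q]$ then $u \in k[p,q]$.

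There is no real obstacle here: the substantive work was done earlier, in verifying the hypotheses of Theorem \ref{root2} (particularly the square-factorial closedness, which used the Jedrzejewicz--Zieli\'{n}ski criterion in Theorem \ref{root1} together with Wang's theorem). The only point worth double-checking is the unit condition $k[p,q]^* = k[x,y]^*$, and this could alternatively be cited from the same source as the other facts in \cite{nice} rather than re-derived, to keep the proof at the level of a single sentence.
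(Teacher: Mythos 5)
Your proposal is correct and matches the paper's proof exactly: the paper likewise applies Theorem \ref{root2} after citing that $k[x,y]$ is a UFD, $k[p,q]^*=k[x,y]^*$, $k(p,q)\cap k[x,y]=k[p,q]$ (Theorem \ref{flat}), and square-factorial closedness (Theorem \ref{square-factorially closed}). Your extra justification of the unit condition via algebraic independence of $p,q$ is a harmless elaboration the paper leaves implicit.
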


\begin{proof}
We can apply Theorem \ref{root2}, since $k[x,y]$ is a UFD,
$k[p,q]^*=k[x,y]^*$ and $k(p,q) \cap k[x,y]=k[p,q]$ (Theorem \ref{flat})
and $k[p,q]$ is square-factorially closed in $k[x,y]$ 
by Theorem \ref{square-factorially closed}.
\end{proof}

Theorem \ref{root closed} says: 
For $u \in k[x,y]$ and $n \geq 1$, if $u^n \in k[p,q]$, then $u \in k[p,q]$;
we will apply this property several times in the proof of Theorem \ref{my1}.

\subsection{Galois extension}

\cite[Theorem 2.1]{bcw}, with $(g)$ implies $(a)$:
\begin{theorem}\label{galois}
If $k(p,q) \subseteq k(x,y)$ is Galois, then $f$ is an automorphism.
\end{theorem}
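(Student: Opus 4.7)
The plan is to realize $f$ as a connected, finite, \'etale Galois cover of $\mathbb{A}^{2}_{\mathbb{C}}$ and then invoke the algebraic simple-connectedness of $\mathbb{A}^{2}_{\mathbb{C}}$. Since $k(x,y)/k(p,q)$ is Galois (hence algebraic) and is finitely generated over $k(p,q)$ by $x,y$, it is finite; set $n=[k(x,y):k(p,q)]$ and $G=\operatorname{Gal}(k(x,y)/k(p,q))$, so $|G|=n$. The first task is to promote the Galois action on the field $k(x,y)$ to an action on the ring $k[x,y]$; equivalently, to show that $x,y$ are integral over $k[p,q]$. The Keller hypothesis $\Jac(p,q)\in k^{*}$ makes the inclusion $k[p,q]\hookrightarrow k[x,y]$ \'etale (unramified and flat) and of finite type, with generic fiber of length $n$. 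I would then invoke Zariski's Main Theorem (together with normality of $k[p,q]$) to upgrade this quasi-finite, \'etale map to a finite \'etale one; since $k[x,y]$ is a UFD it is normal, and hence coincides with the integral closure of $k[p,q]$ in $k(x,y)$. This integral closure is $G$-stable, so $G$ acts on $k[x,y]$ by $k$-algebra automorphisms.

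With the action in place, one computes
\[
k[x,y]^{G}\;=\;k(x,y)^{G}\cap k[x,y]\;=\;k(p,q)\cap k[x,y]\;=\;k[p,q],
\]
the last equality being Theorem~\ref{flat}. Thus $\operatorname{Spec} k[x,y]\to\operatorname{Spec} k[p,q]$ is a connected finite \'etale Galois cover of $\mathbb{A}^{2}_{\mathbb{C}}$ of degree $n$. Since the \'etale fundamental group of $\mathbb{A}^{2}_{\mathbb{C}}$ is trivial (equivalently, $\mathbb{C}^{2}$ is simply connected in the analytic topology), every such cover is trivial, so $n=1$. Then $k(p,q)=k(x,y)$, and Theorem~\ref{flat} again gives $k[x,y]\subseteq k(p,q)\cap k[x,y]=k[p,q]$, so $k[p,q]=k[x,y]$ and $f$ is an automorphism.

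The main obstacle is passing from a finite field extension to a finite ring extension; this is the only step that requires substantial input beyond the results collected in the excerpt, and is exactly where Zariski's Main Theorem enters. A purely algebraic alternative to the appeal to simple-connectedness is available once $G$ acts on $k[x,y]$: because the invariant ring $k[p,q]$ is itself a polynomial ring, Chevalley--Shephard--Todd forces $G$ to be generated by pseudo-reflections, whereas the \'etaleness of $f$ forbids non-trivial pseudo-reflections (which fix codimension-one loci), again yielding $G=\{1\}$.
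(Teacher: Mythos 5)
The paper offers no proof of this statement at all: it is quoted from Bass--Connell--Wright \cite{bcw} (Theorem 2.1, (g) $\Rightarrow$ (a); the result goes back to Campbell), so the only question is whether your reconstruction is sound. It is not: the central step fails. Zariski's Main Theorem does not ``upgrade this quasi-finite, \'etale map to a finite \'etale one''; it factors $\operatorname{Spec}k[x,y]\to\operatorname{Spec}k[p,q]$ as an open immersion $\operatorname{Spec}k[x,y]\hookrightarrow\operatorname{Spec}R$ followed by the finite morphism $\operatorname{Spec}R\to\operatorname{Spec}k[p,q]$, where $R$ is the integral closure of $k[p,q]$ in $k(x,y)$, and gives no reason for that open immersion to be an isomorphism. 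Normality of $k[x,y]$ only yields $R\subseteq k[x,y]$, not the equality you assert: equality is precisely the claim that $x,y$ are integral over $k[p,q]$, which is condition (b) of BCW's Theorem 2.1 and is itself equivalent to $f$ being an automorphism. Note that your argument for finiteness nowhere uses the Galois hypothesis; if it were valid it would prove the full two-dimensional Jacobian Conjecture (a finite \'etale cover of the simply connected $\mathbb{C}^2$ is trivial), which is a clear warning sign.

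The Galois hypothesis must enter exactly at this point, and your sketch skips the work it is needed for. The standard repair replaces $k[x,y]$ by $R$: the group $G$ stabilizes $R$ with $R^G=k[p,q]$, the finite map $\operatorname{Spec}R\to\mathbb{A}^2$ has branch locus empty or pure of codimension one (Zariski--Nagata purity, using that $R$ is normal and $k[p,q]$ regular), and over a height-one branch prime the transitivity of $G$ on the fibre forces \emph{every} prime of $R$ above it to be ramified; one then plays this off against the unramifiedness of $k[x,y]$ over $k[p,q]$ on the open locus $\operatorname{Spec}k[x,y]\subseteq\operatorname{Spec}R$ to conclude $R=k[p,q]$, hence $k(x,y)=k(p,q)$, and Keller's theorem (Theorem \ref{keller thm}) finishes. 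Even this requires a nontrivial argument that each candidate branch curve meets the image of $f$ (Campbell's original proof handles the analogous point analytically via covering-space theory). Your second paragraph, and the Chevalley--Shephard--Todd alternative, are fine \emph{conditional} on finiteness and on $G$ acting on $k[x,y]$, but neither supplies the missing step.
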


Then,
\begin{corollary}\label{galois cor}
If $[k(x,y):k(p,q)]=2$, then $f$ is an automorphism.
\end{corollary}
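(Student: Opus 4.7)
The plan is very short: Corollary \ref{galois cor} follows immediately from Theorem \ref{galois} once we observe that a degree-two field extension is automatically Galois in our setting.

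First I would note that since $k=\mathbb{C}$ has characteristic $0$, the extension $k(p,q)\subseteq k(x,y)$ is separable. Any algebraic extension of degree $2$ is also normal: if $\alpha\in k(x,y)\setminus k(p,q)$ generates the extension with minimal polynomial $t^2+bt+c\in k(p,q)[t]$, then the second root is $-b-\alpha\in k(p,q)(\alpha)=k(x,y)$, so $k(x,y)$ is a splitting field. Hence $k(p,q)\subseteq k(x,y)$ is Galois.

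Then I would simply invoke Theorem \ref{galois} to conclude that $f$ is an automorphism. There is no obstacle here; the corollary is purely a packaging of Theorem \ref{galois} in the degree-two case, and its role in the paper is to dispose of the prime $\ell=2$ separately from the main arguments (Theorems \ref{my1} and \ref{my2}) that handle the remaining odd primes under the $xy\notin k[p,q]$ and $xy\in k[p,q]$ dichotomy.
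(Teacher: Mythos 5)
Your proof is correct and is essentially the paper's own argument: the paper simply cites the well-known fact that a degree-two extension is Galois (which you spell out via separability in characteristic $0$ plus the splitting-field observation) and then applies Theorem \ref{galois}. One small contextual correction: in the paper the corollary is not used to dispose of the prime $2$ up front, but rather as the final step of Theorem \ref{my1}, where the MO answer forces $[k(x,y):k(p,q)]=2$ after the rare property is verified.
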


\begin{proof}
It is well-known that an extension of degree two is Galois, hence by Theorem \ref{galois},
$f$ is an automorphism.
\end{proof}

\subsection{Keller's theorem}

\begin{theorem}[Keller's theorem]\label{keller thm}
If $k(x,y)=k(p,q)$, then $k[x,y]=k[p,q]$, namely, $f$ is an automorphism.
\end{theorem}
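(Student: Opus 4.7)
My plan is to deduce the theorem in essentially one line from Wang's Theorem \ref{flat}, which packages all of the substantive content. Since the inclusion $k[p,q] \subseteq k[x,y]$ is automatic, it suffices to prove the reverse inclusion $k[x,y] \subseteq k[p,q]$.

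To do this I would argue as follows. Under the hypothesis $k(x,y) = k(p,q)$, every element $h \in k[x,y]$ automatically lies in $k(p,q)$ as well, simply because $k[x,y] \subseteq k(x,y) = k(p,q)$. Hence
\[
k[x,y] \;\subseteq\; k(p,q) \cap k[x,y],
\]
and Theorem \ref{flat} identifies the right-hand side with $k[p,q]$. This yields $k[x,y] \subseteq k[p,q]$, and combined with the trivial reverse inclusion one obtains $k[p,q] = k[x,y]$, which is exactly the claim that $f$ is an automorphism.

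There is no real obstacle along this route: once Theorem \ref{flat} is granted, Keller's theorem is a one-line corollary, and the Jacobian hypothesis $\Jac(p,q) \in k^*$ enters only through Wang's result. As a sanity check, I note that a second equally cheap route is to invoke Theorem \ref{galois}: the trivial extension $k(p,q) \subseteq k(x,y) = k(p,q)$ has degree $1$ and is therefore tautologically Galois, so \ref{galois} produces the automorphism conclusion directly. Either way, Keller's theorem emerges as an essentially free consequence of the machinery already assembled in Section~2, and the only thing worth highlighting in the write-up is the observation that $k[x,y] \subseteq k(p,q) \cap k[x,y]$ as soon as $k(x,y) = k(p,q)$.
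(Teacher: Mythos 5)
Your argument is correct. The paper itself states Keller's theorem without proof, citing it as a classical known result, so there is no "paper proof" to compare against; but your derivation is a legitimate and standard way to obtain it from the machinery already in Section~2. The chain $k[x,y] \subseteq k(x,y) = k(p,q)$, hence $k[x,y] \subseteq k(p,q) \cap k[x,y] = k[p,q]$ by Theorem \ref{flat}, combined with the trivial inclusion $k[p,q] \subseteq k[x,y]$, is airtight, and it correctly isolates where the Jacobian hypothesis enters (only through Wang's result). Your alternative route via Theorem \ref{galois} is also valid, since a degree-one extension is trivially Galois. The only caveat worth recording is a historical/logical one: Keller's theorem predates both Wang's theorem and the Bass--Connell--Wright equivalences, so presenting it as a corollary of Theorem \ref{flat} inverts the usual order of dependence; that is harmless here because the paper takes Wang's theorem as a black box, but one should not claim that this is how Keller originally proved it.
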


\subsection{Injectivity on one line} 

Two results concerning injectivity; we will apply both results in Theorem \ref{my2}.

\cite[page 284]{essen embedding}:
\begin{definition}\label{def embedding}
A polynomial map $k \ni t \mapsto g(t)=(g_1(t),\ldots,g_n(t)) \in k^n$
is called an \textit{embedding} of $k$ in $k^n$ if via $g$ $k$ is isomorphic to its image
i.e. there exists a polynomial map $G:k^n \to k$ such that $g$ and $G \big|_{\Ima g}$
are each others inverses.

In algebraic terms we get: 
$g$ is an embedding if and only if $k[g_1(t),\ldots,g_n(t)]=k[t]$.
\end{definition}

\begin{proposition}\label{embedding}
$g$ is an embedding if and only if $g'(t) \neq \bar{0}$ for all $t \in k$ 
and the map $g: k \to k^n$ is injective.
\end{proposition}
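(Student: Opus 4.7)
My plan is to handle the two directions separately. The forward direction is immediate: if $g$ is an embedding there exists $G\in k[x_1,\ldots,x_n]$ with $G(g_1(t),\ldots,g_n(t))=t$, which makes $g$ injective (it has a left inverse) and, after differentiating via the chain rule, yields $\sum_{i=1}^n (\partial_i G)(g(t))\,g_i'(t)=1$, precluding $g'(t)=\bar{0}$ at any $t\in k$.

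For the reverse direction, I set $R=k[g_1(t),\ldots,g_n(t)]\subseteq k[t]$; the goal is $R=k[t]$. My first step is to show that $k(g_1,\ldots,g_n)=k(t)$. By L\"uroth's theorem this intermediate field has the form $k(u)$ for some $u=A(t)/B(t)\in k(t)$ with $[k(t):k(u)]=d=\max(\deg A,\deg B)$. Each $g_i$ is a rational function of $u$, so whenever $u(t_1)=u(t_2)$ one has $g(t_1)=g(t_2)$; for generic $c\in k$ the equation $u(t)=c$ has $d$ distinct solutions in $k$, all of which $g$ would send to a common point, and the injectivity hypothesis forces $d=1$.

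My second step upgrades the equality of fraction fields to an equality of rings by a local argument at each maximal ideal of $R$. Since the derivative hypothesis forces some $g_i$ to be non-constant, $k[t]$ is finite over $k[g_i]\subseteq R$, hence a finitely generated $R$-module, and the equality $F(R)=k(t)$ makes $k[t]$ the integral closure of $R$ in $k(t)$. Fix a maximal ideal $\mathfrak{m}\subset R$; lying-over provides a maximal ideal of $k[t]$ over $\mathfrak{m}$, and injectivity of $g$ forces it to be unique, of the form $(t-a)\,k[t]$ for the corresponding $a\in k$. Picking $i$ with $g_i'(a)\neq 0$, the factorization $g_i(t)-g_i(a)=(t-a)\,h_i(t)$ with $h_i(a)=g_i'(a)\neq 0$ places $t-a$ inside $\mathfrak{m}\cdot k[t]_{(t-a)}$, so that ideal coincides with the maximal ideal of $k[t]_{(t-a)}$. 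Since $k[t]_{(t-a)}$ is a finitely generated $R_{\mathfrak{m}}$-module (the localization of $k[t]$ at $R\setminus\mathfrak{m}$ has only the maximal ideal $(t-a)$ surviving by the uniqueness above), Nakayama's lemma then gives $R_{\mathfrak{m}}=k[t]_{(t-a)}$.

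Because this equality holds at every maximal ideal of $R$, the finitely generated $R$-module $k[t]/R$ vanishes locally everywhere, hence is zero, and $R=k[t]$ as required. The main obstacle I expect, and the place that requires the most care, is the L\"uroth step: one must verify that the rational functions $F_i$ with $g_i=F_i(u)$ are defined at the generic $c$ so that the equality $g(t_j)=(F_1(c),\ldots,F_n(c))$ across all $d$ preimages $t_j$ of $c$ can be validly read as producing $d$ points of $\mathbb{A}^1$ with a common image under $g$.
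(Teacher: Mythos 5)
Your argument is correct, but note that the paper does not actually prove this proposition: it is quoted verbatim from van den Essen's survey (\cite{essen embedding}) as a known characterization of embedded lines, so there is no in-paper proof to compare against. Your forward direction (differentiate $G(g(t))=t$) is exactly the standard one. Your reverse direction is a sound self-contained proof in commutative-algebra language: L\"uroth plus injectivity to get $k(g_1,\dots,g_n)=k(t)$, then finiteness of $k[t]$ over $R=k[g_1,\dots,g_n]$ and a local Nakayama argument to promote this to $R=k[t]$. Two steps are compressed and deserve a line each if you write this up: (i) uniqueness of the maximal ideal of $k[t]$ over $\mathfrak{m}$ follows because if $(t-a)$ and $(t-b)$ both contract to $\mathfrak{m}$ then $g_i-g_i(a)\in\mathfrak{m}\subseteq(t-b)$ gives $g(a)=g(b)$, whence $a=b$ by injectivity; and (ii) the Nakayama step needs not only $\mathfrak{m}\,k[t]_{(t-a)}=(t-a)k[t]_{(t-a)}$ but also $k[t]_{(t-a)}=R_{\mathfrak{m}}+\mathfrak{m}\,k[t]_{(t-a)}$, which holds because the residue field of $k[t]_{(t-a)}$ is $k\subseteq R_{\mathfrak{m}}$; only then does Nakayama applied to the finite module $k[t]_{(t-a)}/R_{\mathfrak{m}}$ give the conclusion. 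The concern you flag about the $F_i$ being undefined at the generic value $c$ is harmless: each $F_i$ has finitely many poles, so a generic $c$ avoids them all, and in characteristic zero a generic fiber of $u$ is indeed reduced with $d$ points.
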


\cite[Theorem 1.1]{inj on one line}:
\begin{theorem}\label{inj}
Let $H: k^2 \to k^2$ be a polynomial mapping such that $\Jac(H) \in k^*$.
If there exists a line $l \subset k^2$ such that $H \big|_l : l \to k^2$ is injective
then $H$ is a polynomial automorphism.
\end{theorem}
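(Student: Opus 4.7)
The plan is to promote the injectivity of $H|_l$ to a polynomial embedding via Proposition~\ref{embedding}, rectify via Abhyankar--Moh--Suzuki so that $H$ fixes a coordinate line pointwise, and then exploit the Keller condition to conclude. To start, parametrize $l$ by $\gamma(t) = (a_0+bt,\,c_0+dt)$ with $(b,d)\neq (0,0)$ and set $g(t)=H(\gamma(t))$. By the chain rule, $g'(t)=DH(\gamma(t))\cdot (b,d)^\top$; since $\Jac(H)\in k^*$ forces $DH$ to be invertible at every point of $k^2$, we have $g'(t)\neq \bar{0}$ for all $t$. Combined with injectivity of $H|_l$, Proposition~\ref{embedding} yields that $g$ is a polynomial embedding $k \hookrightarrow k^2$, so $H(l)$ is an algebraically embedded affine line in $k^2$.

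Next, by the Abhyankar--Moh--Suzuki theorem, every polynomial embedding $k\hookrightarrow k^2$ can be rectified by an automorphism of $k^2$. Replacing $H$ by $\phi\circ H\circ\psi$ for suitable $\phi,\psi\in\Aut(k^2)$ (with $\psi$ linear, sending $l$ to the $x$-axis, and $\phi$ straightening $H(l)$), we may assume that $H$ maps the $x$-axis bijectively to itself as $t\mapsto (h(t),0)$. Since an injective polynomial map $k\to k$ over $k=\mathbb{C}$ is necessarily linear, a final linear change reduces to $H(t,0)=(t,0)$ for all $t\in k$. All of these steps preserve both the Keller condition and the injectivity-on-one-line hypothesis.

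In this normal form, write $H(x,y) = (x+yA(x,y),\,yB(x,y))$. Evaluating $\det DH$ at $y=0$ gives $B(x,0)=c\in k^*$, and since $\det DH$ is itself a unit, $\det DH\equiv c$. The goal is then to show $B\equiv c$: once this is known, the identity $\det DH\equiv c$ forces $A_x=0$, hence $A=A(y)$, and $H(x,y)=(x+yA(y),\,cy)$ is manifestly an automorphism with polynomial inverse. To prove $B\in k^*$ I would argue by contradiction: if $B$ were non-constant, then $V(B)$ would be a nonempty closed curve over $\mathbb{C}$ disjoint from the $x$-axis, and \'{e}taleness of $H$ would force $V(B)$ to be smooth and the restriction $H|_{V(B)}\colon V(B)\to\{y=0\}$ to be \'{e}tale on each component; one then derives a contradiction from the simple-connectedness of $\mathbb{A}^1$ combined with the already-injective identity sheet of $H$ over the $x$-axis.

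The principal obstacle is this last step -- propagating injectivity from a single line to the whole plane using only the \'{e}tale/Keller structure -- and I expect the bulk of the technical work to concentrate there. An alternative route that I would also explore is through Theorem~\ref{galois}, using a Galois-closure analysis to force $[k(x,y):k(H_1,H_2)]=1$ directly once one knows that $H$ restricts to an isomorphism on a straight line.
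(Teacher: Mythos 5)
This statement is not proved in the paper at all: it is Gwo\'{z}dziewicz's theorem, imported as a black box from \cite{inj on one line}. So the relevant question is whether your argument actually establishes it. Your preparatory reductions are correct: the chain-rule computation $g'(t)=DH(\gamma(t))(b,d)^{\top}\neq\bar 0$, the use of Proposition~\ref{embedding} to get an embedding, the Abhyankar--Moh--Suzuki rectification to the normal form $H(t,0)=(t,0)$, the resulting shape $H=(x+yA,\,yB)$ with $B(x,0)=c\in k^{*}$, and the observation that $B\equiv c$ would finish the proof. All of that is standard and sound.

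The problem is the step you yourself flag as the ``principal obstacle'': proving $B\in k^{*}$. This is not a technical remainder --- it \emph{is} the theorem. Any zero $(x_0,y_0)$ of $B$ has $y_0\neq 0$ and satisfies $H(x_0,y_0)=(x_0+y_0A(x_0,y_0),0)=H(x_0+y_0A(x_0,y_0),0)$, so $V(B)=\emptyset$ is equivalent to $H^{-1}(\{y=0\})=\{y=0\}$, which is essentially the global injectivity you are trying to prove. The topological argument you sketch does not close this: (i) $H|_{V(B)}\colon V(B)\to\mathbb{A}^1$ is \'{e}tale but not proper, hence need not be a covering map, so $\pi_1(\mathbb{A}^1)=1$ yields nothing (the open immersion $\mathbb{A}^1\setminus\{0\}\hookrightarrow\mathbb{A}^1$ is \'{e}tale and is not an isomorphism); and (ii) even if every component of $V(B)$ did map isomorphically onto the $x$-axis, this would not contradict the hypothesis, which asserts injectivity of $H$ restricted to the line $l$ itself, not injectivity of $H$ on all of $H^{-1}(H(l))$ --- extra sheets over the $x$-axis are exactly what you must rule out, and your setup gives no mechanism for doing so. Gwo\'{z}dziewicz's actual proof supplies that mechanism by applying the Abhyankar--Moh divisibility theorem to the degrees of the polynomial parametrization $t\mapsto(P(t,0),Q(t,0))$ together with a degree estimate forced by the Jacobian condition; nothing in your sketch (nor in the vague alternative via Theorem~\ref{galois}) substitutes for that, so the proof as proposed has a genuine gap at its decisive step.
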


\subsection{Common zeros of two polynomials} 

\cite[Theorem 11.9.10]{artin}:
\begin{theorem}\label{common zeros}
Let $f=f(x,y), g=g(x,y) \in k[x,y]$ be two nonzero polynomials in two variables.
Then $f$ and $g$ have only finitely many common zeros in $k^2$,
unless they have a common nonconstant factor in $k[x,y]$.
\end{theorem}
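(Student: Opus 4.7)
The plan is to prove the contrapositive. Assuming $f, g \in k[x,y]$ share no nonconstant factor, I will show they have only finitely many common zeros in $k^2$.

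First, dispatch the degenerate case in which one of the polynomials, say $f$, lies in $k[x]$. Then $f$ has finitely many roots $a_1,\ldots,a_r \in k$, and since $(x-a_i)$ cannot divide $g$ (else it would be a common nonconstant factor), the univariate polynomial $g(a_i,y) \in k[y]$ is nonzero for each $i$ and has only finitely many roots. Moreover, any common zero $(a,b)$ must have $f(a)=0$, so $a \in \{a_1,\ldots,a_r\}$, and the common zero set is finite.

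In the main case both $f$ and $g$ have positive $y$-degree. View them as elements of the polynomial ring $k[x][y]$. Gauss's lemma transports coprimality to $k(x)[y]$: any common factor in $k(x)[y]$ of positive $y$-degree could, after clearing content, be replaced by a factor in $k[x,y]$ with the same positive $y$-degree, contradicting the hypothesis. Since $k(x)[y]$ is a PID, there exist $A,B \in k(x)[y]$ with $Af+Bg=1$. Clearing denominators in $k[x]$ yields an identity $\tilde A f + \tilde B g = h(x)$ for some nonzero $h(x) \in k[x]$. Evaluating at any common zero $(a,b) \in k^2$ of $f,g$ forces $h(a)=0$, restricting the $x$-coordinate to the finite set of roots of $h$; the symmetric argument, regarding $f,g$ as elements of $k[y][x]$, restricts the $y$-coordinate similarly, so altogether there are only finitely many common zeros.

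The main step is Gauss's lemma, which is what allows the passage from the UFD $k[x,y]$, where Bezout identities are generally unavailable, to the PID $k(x)[y]$, where they are. Equivalently, the argument could be packaged via the resultant $\mathrm{Res}_y(f,g) \in k[x]$, whose nonvanishing under the coprimality hypothesis is again precisely Gauss's lemma; any common zero $(a,b)$ then forces $a$ to be a root of this resultant. Once this ingredient is in place, the rest is a routine evaluation argument.
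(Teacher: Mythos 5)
Your proof is correct, and there is nothing in the paper to compare it against: the paper states this result as a quotation of Artin's \emph{Algebra}, Theorem 11.9.10, and uses it as a black box without proof. Your argument --- Gauss's lemma to pass to the PID $k(x)[y]$, a Bezout identity cleared of denominators to confine the $x$-coordinates of common zeros to the roots of some nonzero $h(x)\in k[x]$, and the symmetric step for the $y$-coordinates (equivalently, the resultant formulation) --- is precisely the standard textbook proof of the cited result.
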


\begin{proposition}\label{common zeros jc}
$p$ and $q$ have only finitely many common zeros in $k^2$.
\end{proposition}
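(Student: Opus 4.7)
The plan is to apply Theorem \ref{common zeros} directly, so the task reduces to showing that $p$ and $q$ share no common nonconstant factor in $k[x,y]$. This is where the Keller hypothesis $\Jac(p,q) \in k^*$ enters crucially.

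I would argue by contradiction. Suppose there is a nonconstant $h \in k[x,y]$ dividing both $p$ and $q$, and write $p = h p_1$, $q = h q_1$ with $p_1, q_1 \in k[x,y]$. Then by the product rule
\begin{equation*}
p_x = h_x p_1 + h p_{1,x}, \quad p_y = h_y p_1 + h p_{1,y},
\end{equation*}
and similarly for $q_x, q_y$. Substituting into $\Jac(p,q) = p_x q_y - p_y q_x$ and expanding, the leading terms of the form $h_x h_y p_1 q_1$ cancel, and every remaining term contains a factor of $h$. Thus $h \mid \Jac(p,q)$ in $k[x,y]$.

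Since $\Jac(p,q) \in k^*$ is a nonzero constant, the divisibility $h \mid \Jac(p,q)$ forces $h \in k^*$, contradicting the assumption that $h$ is nonconstant. Therefore $p$ and $q$ have no common nonconstant factor, and Theorem \ref{common zeros} yields the finiteness of their common zero set in $k^2$.

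There is no real obstacle here; the only mildly technical step is verifying by the bookkeeping above that $h$ divides $\Jac(p,q)$. The whole argument is essentially the standard observation that a nonconstant common factor of $p$ and $q$ would show up as a nonconstant factor of their Jacobian determinant, which is incompatible with the Keller condition.
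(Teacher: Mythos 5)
Your proposal is correct and follows essentially the same route as the paper: reduce via Theorem \ref{common zeros} to excluding a common nonconstant factor, then expand $\Jac(hp_1,hq_1)$ by the product rule, observe the $h_xh_yp_1q_1$ terms cancel so that $h$ divides the constant $\Jac(p,q)$, and derive a contradiction. The paper packages the same cancellation as the identity $\Jac(r\tilde p, r\tilde q)=r\bigl[r\Jac(\tilde p,\tilde q)+\tilde q\Jac(\tilde p,r)+\tilde p\Jac(r,\tilde q)\bigr]$, but the argument is identical.
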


\begin{proof}
By Theorem \ref{common zeros} $p$ and $q$ have only finitely many common zeros in $k^2$,
unless they have a common nonconstant factor in $k[x,y]$.

Therefore, we wish to show that $p$ and $q$ do not have a common nonconstant factor in $k[x,y]$.

Otherwise, $r=r(x,y) \in k[x,y]$ is a common nonconstant factor of $p$ and $q$,
namely, $d_{1,1}(r) \geq 1$, $p=r\tilde{p}$ and $q=r\tilde{q}$, 
where $\tilde{p}, \tilde{q} \in k[x,y]$.

On the one hand, $\Jac(p,q) \in k^*$.

On the other hand, 
$\Jac(p,q) = \Jac(r\tilde{p},r\tilde{q}) = \ldots = 
r[r\Jac(\tilde{p},\tilde{q})+\tilde{q}\Jac(\tilde{p},r)+\tilde{p}\Jac(r,\tilde{q})]= rw$,
where $w=r\Jac(\tilde{p},\tilde{q})+\tilde{q}\Jac(\tilde{p},r)+\tilde{p}\Jac(r,\tilde{q}) \in k[x,y]$.

Then, $k^* \ni \Jac(p,q)= rw$ with $r,w \in k[x,y]$ and $d_{1,1}(r) \geq 1$, which is impossible
($rw \in k^*$ implies that $r,w \in k^*$, which contradicts $d_{1,1}(r) \geq 1$).

Our assumption that $r=r(x,y) \in k[x,y]$ is a common nonconstant factor of $p$ and $q$
yields an impossible situation, hence $p$ and $q$ do not have a common nonconstant factor in $k[x,y]$.
\end{proof}

\subsection{An extension with 'many' primitive elements}
Finally, we present our question \cite{mo}.

\begin{question}\label{mo question}
Let $R \subseteq k(x,y)$ and assume that $R=k(u,v)$, 
where $u,v \in k[x,y]$ are algebraically independent over $k$.

Here $\mathbb{N}$ includes $0$.

Assume that $R$ satisfies the following 'rare' property:
For every monomial $x^iy^j$, $i \in \mathbb{N}$, $j \in \mathbb{N}$ 
(except the case $i=j=0$, for which we assume nothing), 
we have $k(u,v,x^iy^j)=k(x,y)$.

\textbf{Question:} Is it true that $R=k(x,y)$?

I am not able to find a counterexample, but perhaps there is such.

I do not mind to further assume that $x+y$ also satisfies $k(u,v,x+y)=k(x,y)$.

Any help is welcome! Thank you very much.
\end{question}

In the question, $u$ and $v$ are algebraically independent over $k$,
hence $\Jac(u,v) \in k[x,y]-\{0\}$; 
there is no assumption that they have an invertible Jacobian, 
so any nonzero polynomial as a Jacobian is fine.

\begin{answer}\label{mo answer}
Without considering the additional condition $k(u,v,x+y)=k(x,y)$, 
it was proved in the answer that: $[k(x,y):R]=[k(x,y):k(u,v)]=2$.
\end{answer}


\section{Our result}



\begin{definition}['Rare property']\label{rare property}
Here $\mathbb{N}$ includes $0$.
For every $i,j \in \mathbb{N}$, denote by $C_{i,j}$ the following property:
$k(p,q,x^iy^j)=k(x,y)$.

If for every $(i,j) \in \mathbb{N} \times \mathbb{N} - \{(0,0)\}$,
$C_{i,j}$ holds, then we say that $k[p,q]$ satisfies \textit{the rare property}.
\end{definition}

We are ready to prove the first case, which says:
``There is no Keller map $(x,y) \mapsto (p,q)$ having prime degree field extension 
and $xy \notin k(p,q)$".

\begin{theorem}[First Case]\label{my1}
Assume that:
\begin{itemize}
\item $[k(x,y):k(p,q)]=P$, for some prime number $P$.
\item $xy \notin k(p,q)$.
\end{itemize}
Then $f$ is an automorphism, $k[p,q]=k[x,y]$, hence $k(p,q)=k(x,y)$, so $[k(x,y):k(p,q)]=1$, 
and there is no such map. 
\end{theorem}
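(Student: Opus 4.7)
The plan is to show that under the hypotheses $k[p,q]$ satisfies the rare property of Definition \ref{rare property}, then invoke Answer \ref{mo answer} to force $[k(x,y):k(p,q)] = 2$, and finally derive a contradiction from Corollary \ref{galois cor} together with Keller's theorem.

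To establish the rare property, fix $(i,j) \in \mathbb{N}^2 \setminus \{(0,0)\}$. Because $P$ is prime, the tower $k(p,q) \subseteq k(p,q, x^iy^j) \subseteq k(x,y)$ admits no proper intermediate field, so either $x^iy^j \in k(p,q)$ or $k(p,q, x^iy^j) = k(x,y)$; the task is to rule out the first alternative. Assuming $x^iy^j \in k(p,q)$, Theorem \ref{flat} places $x^iy^j$ in $k[p,q]$, and then Theorem \ref{root closed} lets us reduce to $\gcd(i,j) = 1$: setting $d = \gcd(i,j)$, the equation $x^iy^j = (x^{i/d}y^{j/d})^d$ shows that $x^{i/d}y^{j/d}$ also lies in $k[p,q]$. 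Once $\gcd(i,j) = 1$, if $i = 0$ or $j = 0$ then $y \in k[p,q]$ or $x \in k[p,q]$ respectively, so Corollary \ref{cor formanek1} would give $k[p,q] = k[x,y]$ and hence $P = 1$, contradicting the primality of $P$. If instead $i,j \geq 1$, coprimality forces at most one of them to be even, so $(i \bmod 2,\, j \bmod 2) \in \{(1,1),(1,0),(0,1)\}$, and accordingly $x^iy^j$ factors as $(x^ay^b)^2 \cdot v$ with square-free $v \in \{xy,\, x,\, y\}$; Theorem \ref{square-factorially closed} then puts $v$ into $k[p,q]$. The cases $v = x$ and $v = y$ again collapse $P$ to $1$ via Corollary \ref{cor formanek1}, while $v = xy$ directly contradicts the hypothesis $xy \notin k(p,q)$.

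Having shown that $k(p,q, x^iy^j) = k(x,y)$ for every $(i,j) \neq (0,0)$, i.e.\ that $k[p,q]$ enjoys the rare property, Answer \ref{mo answer} applied with $R = k(p,q)$ (taking $u = p$ and $v = q$, which are algebraically independent since $\Jac(p,q) \in k^*$) forces $[k(x,y):k(p,q)] = 2$, so in particular $P = 2$. Since every degree-two field extension is Galois, Corollary \ref{galois cor} then yields that $f$ is an automorphism, giving $k[p,q] = k[x,y]$ and hence $[k(x,y):k(p,q)] = 1$, which contradicts $P = 2$; therefore no such Keller map exists. The main obstacle in executing this plan is the parity case analysis behind the rare property: one must verify that the element $v$ peeled off by splitting a perfect square is genuinely square-free in each parity regime, and that all three resulting memberships ($x$, $y$, or $xy$ lying in $k[p,q]$) really yield contradictions, drawing on three genuinely distinct inputs — Corollary \ref{cor formanek1}, the primality of $P$, and the standing assumption $xy \notin k(p,q)$.
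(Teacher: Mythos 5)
Your proposal is correct and follows the same overall architecture as the paper: establish the ``rare property'' of Definition \ref{rare property} by showing that for each $(i,j)\neq(0,0)$ either $x^iy^j$ is a primitive element (automatic from primality of $P$) or $x^iy^j\in k(p,q)$ leads to a contradiction via Theorem \ref{flat}, root closedness, and square-factorial closedness; then invoke Answer \ref{mo answer} to force degree $2$ and Corollary \ref{galois cor} to conclude. Where you genuinely diverge is in the organization of the membership case analysis, and your version is cleaner. The paper splits directly on the parities of $(i,j)$ and must therefore wrestle with the ``both even'' sub-case, which it handles by writing $i=2^nN$, $j=2^mM$ with $N,M$ odd, peeling off a $2^m$-th power by root closedness, and then splitting again on whether $n=m$ or $n>m$. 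You instead divide out $d=\gcd(i,j)$ once at the start using Theorem \ref{root closed} (valid, since $x^iy^j=(x^{i/d}y^{j/d})^d$), after which coprimality guarantees at most one exponent is even; this collapses everything into the three parity patterns $(\mathrm{odd},\mathrm{odd})$, $(\mathrm{odd},\mathrm{even})$, $(\mathrm{even},\mathrm{odd})$ plus the degenerate axis cases, each dispatched by a single application of square-factorial closedness with $v\in\{xy,x,y\}$. This also unifies the paper's Cases 1 and 2 (pure powers of $x$ or $y$) into the same mechanism, at the cost of routing them through Corollary \ref{cor formanek1} rather than Theorem \ref{formanek2}; both routes are legitimate. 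Two cosmetic remarks: Keller's theorem, mentioned in your plan, is never actually needed since Corollary \ref{galois cor} already delivers the automorphism; and when you conclude from $x\in k[p,q]$ (resp.\ $y\in k[p,q]$) via Corollary \ref{cor formanek1}, you should make explicit the one-line observation that this membership forces $k[p,q][y]=k[x,y]$ (resp.\ $k[p,q][x]=k[x,y]$), which is the hypothesis the corollary actually requires.
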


\begin{proof}
We will show that for every $(i,j) \in \mathbb{N} \times \mathbb{N} - \{(0,0)\}$, $C_{i,j}$ holds,
namely, $k(p,q,x^iy^j)=k(x,y)$; in other words, we will show that $k[p,q]$ satisfies the rare property,
definition \ref{rare property}.

Having this it is immediate that $f$ is an automorphism; indeed,
Answer \ref{mo answer} implies that $[k(x,y):k(p,q)]=2$, so by Corollary \ref{galois cor}, 
$f$ is an automorphism.

We will show now that for every $(i,j) \in \mathbb{N} \times \mathbb{N} - \{(0,0)\}$, $C_{i,j}$ holds,
dividing $\mathbb{N} \times \mathbb{N} - \{(0,0)\}$ into several subsets:

\textbf{Case 1:} $(i,j) \in \{(1,0),(0,1)\}$: 

$C_{1,0}$ and $C_{0,1}$ say that $k(p,q,x)=k(x,y)$ and $k(p,q,y)=k(x,y)$, respectively,
and these results are true for any Keller map by Theorem \ref{formanek2}.

\textbf{Case 2:} $(i,j) \in \{(n,0)\}_{n \geq 2} \cup \{(0,n)\}_{n \geq 2}$: 

Fix $n \geq 2$. 

If $x^n \in k(p,q)$, then $x^n \in k(p,q) \cap k[x,y] = k[p,q]$ (Theorem \ref{flat}),
so by Theorem \ref{root closed}, $x \in k[p,q]$.
Then $x \in k(p,q)$, which implies that $k(p,q)=k(p,q,x)=k(x,y)$, by Theorem \ref{formanek2}.
We obtained $k(p,q)=k(x,y)$, hence $[k(x,y):k(p,q)]=1$ contrary to our assumption that 
$[k(x,y):k(p,q)]$ is prime.

Therefore, $x^n \notin k(p,q)$, and since $[k(x,y):k(p,q)]$ is prime, we get that
$k(p,q,x^n)=k(x,y)$, since every element $w \notin k(p,q)$ of a prime degree extension must be
a primitive element for that extension.

$k(p,q,x^n)=k(x,y)$ is condition $C_{n,0}$ and we are done.

Similarly for $C_{0,n}$.

\textbf{Case 3:} $(i,j)$, $ij \neq 0$, namely, each of $\{i,j\}$ is non-zero.

$ij \neq 0$ means that $i \geq 1$ and $j \geq 1$.

Divide into four sub-cases; in each case we will obtain that $k(p,q,x^iy^j)=k(x,y)$, 
namely, $C_{i,j}$ holds, for the $i,j$'s dealt in that specific sub-case.

\textbf{(1) Sub-case ee:} Both $i$ and $j$ are even. 

If $i=j$, then $x^iy^i \notin k(p,q)$, since otherwise, $(xy)^i=x^iy^i \in k(p,q)$,
then $(xy)^i \in k(p,q) \cap k[x,y]=k[p,q]$ (Theorem \ref{flat}),
so by root closedness, Theorem \ref{root closed}, $xy \in k[p,q] \subset k(p,q)$, 
contrary to our assumption that $xy \notin k(p,q)$.

Therefore, $x^iy^i \notin k(p,q)$, so $x^iy^i$ is a primitive element,
$k(p,q,x^iy^i)=k(x,y)$, since $[k(x,y):k(p,q)]=P$ is prime,
and we obtained $C_{i,i}$.

Next, if $i \neq j$, write $i=2^nN$ and $j=2^mM$, where each of $N,M$ is odd.
In other words, $n$ is the highest power of $2$ in $i$ and $m$ is the highest power of $2$ is $m$. 
$N$ and $M$ may not be co-prime, but we do not need them to be co-prime, just odd numbers.
It may happen that $n=m$ (but not $n=m$ and $N=M$ simultaneously, which would imply that $i=j$).
W.l.o.g. $n \geq m$

$x^iy^j=x^{2^nN}y^{2^mM}=x^{2^{n-m+m}N}y^{2^mM}$
$=x^{2^{n-m}2^mN}y^{2^mM}=(x^{2^{n-m}N})^{2^m}(y^M)^{2^m}$
$=(x^{2^{n-m}N}y^M)^{2^m}$,
concluding that $x^iy^j=(x^{2^{n-m}N}y^M)^{2^m}$


We will show that $x^iy^j \notin k(p,q)$. 

Otherwise, $(x^{2^{n-m}N}y^M)^{2^m}=x^iy^j \in k(p,q)$.
Then $(x^{2^{n-m}N}y^M)^{2^m} \in k(p,q) \cap k[x,y] = k[p,q]$, 
hence Theorem \ref{root2} implies that 
$x^{2^{n-m}N}y^M \in k[p,q]$.

If $n-m=0$, then $x^Ny^M \in k[p,q]$, with $N$ and $M$ odd.

Then, 
$k[p,q] \ni x^Ny^M=x^{N-1+1}y^{M-1+1}=x^{N-1}xy^{M-1}y=(x^{N-1}y^{M-1})(xy)$.
Each of $N-1$ and $M-1$ is even, including zero (it may happen that one of $\{N-1,M-1\}$ is zero or both are zero),
so write $N-1=2s$ and $M-1=2t$, for some $s,t \in \mathbb{N}$.

Then, 
$k[p,q] \ni x^Ny^M=(x^{N-1}y^{M-1})(xy)$
$=(x^{2s}y^{2t})(xy)=(x^sy^t)^2(xy)$.
Now apply Theroem \ref{root1} with $u=x^sy^t$ and $v=xy$ 
($xy$ is indeed square-free in $k[x,y]$)
to conclude that $x^sy^t \in k[p,q]$ and $xy \in k[p,q]$.

We obtained $xy \in k[p,q] \subset k(p,q)$, but we assumed that $xy \notin k(p,q)$, 
therefore $x^iy^j \notin k(p,q)$.
Then, $k(p,q,x^iy^j)=k(x,y)$, because $[k(x,y):k(p,q)]=P$, $P$ prime.

If $n-m>0$, then $d:=n-m > 0$, 
hence $x^{2^{n-m}N}y^M=x^{2^dN}y^M$,
with $2^dN$ even and $M$ odd (and $N$ odd).
 
$x^{2^dN}y^M=x^{2^{d-1+1}N}y^M$
$=x^{2^{d-1}2^1 N}y^M$
$=x^{2 2^{d-1} N}y^{M-1+1}$
$=(x^{2^{d-1}N})^2 y^{M-1}y$.

$M-1$ is even, so write $M-1=2t$, for some $t \in \mathbb{N}$.
We continue with our computation, 
$(x^{2^{d-1}N})^2 y^{M-1}y=(x^{2^{d-1}N})^2 y^{2t}y$
$=(x^{2^{d-1}N})^2 (y^t)^2 y=(x^{2^{d-1}N} y^t)^2 y$.

Summarizing, $k[p,q] \ni x^{2^{n-m}N}y^M = (x^{2^{d-1}N} y^t)^2 y$.

Now apply Theroem \ref{root1} with $u=x^{2^{d-1}N} y^t$ and $v=y$ ($y$ is indeed square-free in $k[x,y]$)
to conclude that $x^{2^{d-1}N} y^t \in k[p,q]$ and $y \in k[p,q]$.

But $y \in k[p,q]$ implies that $k[p,q][x]=k[x,y]$, 
hence Corollary \ref{cor formanek1} says that $k[p,q]=k[x,y]$,
so $k(p,q)=k(x,y)$, contradicting $[k(x,y):k(p,q)]=P$, $P$ prime.

Therefore, $x^iy^j \notin k(p,q)$.
Then, $k(p,q,x^iy^j)=k(x,y)$, because $[k(x,y):k(p,q)]=P$, $P$ prime.


\textbf{(2) Sub-case oo:} Both $i$ and $j$ are odd.
Then each of $\{i-1,j-1\}$ is even, so we can write $i-1=2s$ and $j-1=2t$,
for some $s,t \in \mathbb{N}$ (it may happen that one of $\{s,t\}$ is zero or both).

$x^iy^j \notin k(p,q)$, since otherwise, $x^iy^j \in k(p,q) \cap k[x,y] = k[p,q]$.

$k[p,q] \ni x^iy^j=x^{i-1+1}y^{j-1+1}=x^{i-1}xy^{j-1}y$
$=x^{2s}xy^{2t}y=(x^s)^2x(y^t)^2y=(x^sy^t)^2(xy)$.

Summarizing, $k[p,q] \ni x^iy^j=(x^sy^t)^2(xy)$.

Now apply Theroem \ref{root1} with $u=x^sy^t$ and $v=xy$ ($xy$ is indeed square-free in $k[x,y]$)
to conclude that $x^sy^t \in k[p,q]$ and $xy \in k[p,q]$.

We obtained $xy \in k[p,q] \subset k(p,q)$, but we assumed that $xy \notin k(p,q)$, 
therefore $x^iy^j \notin k(p,q)$.
Then, $k(p,q,x^iy^j)=k(x,y)$, because $[k(x,y):k(p,q)]=P$, $P$ prime.

\textbf{(3) Sub-case eo:} $i$ is even and $j$ is odd:
$i$ is even, so we can write $i=2s$, for some $s \in \mathbb{N}$ ($s$ may equal zero).
$j$ is odd, then we can write $j=2t+1$, for some $t \in \mathbb{N}$ ($t$ may equal zero).

$x^iy^j \notin k(p,q)$, since otherwise, $x^iy^j \in k(p,q) \cap k[x,y] = k[p,q]$.

$k[p,q] \ni x^iy^j=x^{2s}y^{2t+1}=x^{2s}y^{2t}y$
$=(x^s)^2(y^t)^2y=(x^sy^t)^2y$.

Summarizing, $k[p,q] \ni x^iy^j=(x^sy^t)^2y$.

Now apply Theroem \ref{root1} with $u=x^sy^t$ and $v=y$ ($y$ is indeed square-free in $k[x,y]$)
to conclude that $x^sy^t \in k[p,q]$ and $y \in k[p,q]$.

But $y \in k[p,q]$ implies that $k[p,q][x]=k[x,y]$, hence Corollay ref{cor formanek1} says that $k[p,q]=k[x,y]$,
so $k(p,q)=k(x,y)$, contradicting $[k(x,y):k(p,q)]=P$, $P$ prime.

Therefore, $x^iy^j \notin k(p,q)$.
Then, $k(p,q,x^iy^j)=k(x,y)$, because $[k(x,y):k(p,q)]=P$.

\textbf{(4) Sub-case oe:} $i$ is odd and $j$ is even: Similar to the third sub-case.

\end{proof}

We prove now the second case, which says:
``There is no Keller map $(x,y) \mapsto (p,q)$ having prime degree field extension 
and $xy \in k(p,q)$".

\begin{theorem}[Second Case]\label{my2}
Assume that:
\begin{itemize}
\item $[k(x,y):k(p,q)]=P$, for some prime number $P$.
\item $xy \in k(p,q)$.
\end{itemize}
Then $f$ is an automorphism, $k[p,q]=k[x,y]$, hence $k(p,q)=k(x,y)$, so $[k(x,y):k(p,q)]=1$, 
and there is no such map.
\end{theorem}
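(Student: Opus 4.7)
The plan is to reduce Theorem~\ref{my2} to Theorem~\ref{inj} by exhibiting a line on which $f = (p,q)$ is injective. This will force $f$ to be a polynomial automorphism, which gives $[k(x,y):k(p,q)] = 1$ and contradicts the hypothesis that this degree is the prime $P$; it will then follow that no such map exists.

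The first step is to upgrade $xy \in k(p,q)$ to $xy \in k[p,q]$. Since $xy \in k[x,y]$, Theorem~\ref{flat} yields $xy \in k(p,q) \cap k[x,y] = k[p,q]$, so one can write $xy = h(p,q)$ for some polynomial $h \in k[s,t]$. This is a polynomial identity in $k[x,y]$.

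The key step is to specialize this identity to the line $l = \{y = 1\} \subset k^2$ (any $y = c$ with $c \neq 0$ would do equally well), which gives $x = h(p(x,1), q(x,1))$. This exhibits the $x$-coordinate as a polynomial function of $f(x,1) = (p(x,1), q(x,1))$, so $f\big|_l$ is injective: if $f(x_1, 1) = f(x_2, 1)$, then $x_1 = h(f(x_1,1)) = h(f(x_2,1)) = x_2$. Theorem~\ref{inj} now declares $f$ a polynomial automorphism, so $k[p,q] = k[x,y]$ and hence $[k(x,y):k(p,q)] = 1$, contradicting the primality of $P$. I anticipate no real obstacle here: once the polynomial identity $xy = h(p,q)$ is secured from Theorem~\ref{flat}, the rest is essentially a one-line substitution, making this case considerably shorter than Theorem~\ref{my1}; the heavier machinery (square-factorial and root closedness, the ``rare property'', the MO answer) that was needed in the case $xy \notin k(p,q)$ is not required here.
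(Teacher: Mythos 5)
Your proposal is correct and follows the same core strategy as the paper: obtain $xy=H(p,q)$ in $k[p,q]$ via Theorem~\ref{flat}, specialize to a horizontal line, and invoke Theorem~\ref{inj}. Where you differ is in how injectivity on the line is extracted, and your route is genuinely leaner. The paper first chooses $\mu$ so that the line $y=\mu$ avoids the (finitely many, by Proposition~\ref{common zeros jc}) common zeros of $p$ and $q$, spends Steps~1 and~2 ruling out $p(x,\mu)$ or $q(x,\mu)$ being constant, deduces $k[x]=k[p(x,\mu),q(x,\mu)]$ from $\mu x=H(p(x,\mu),q(x,\mu))$, and only then gets injectivity via the embedding criterion (Definition~\ref{def embedding} and Proposition~\ref{embedding}). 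Your observation that the identity $x=h(p(x,1),q(x,1))$ is already a retraction --- so $f(x_1,1)=f(x_2,1)$ forces $x_1=x_2$ directly --- makes all of that scaffolding unnecessary: no common-zeros argument, no non-degeneracy checks, no appeal to the embedding machinery. (Your parenthetical that any $y=c$ with $c\neq 0$ works is the right caveat; the paper's choice of $\mu$ implicitly needs $\mu\neq 0$ as well, to divide by $\mu$ in $\mu x=H(\cdots)$, though it has infinitely many admissible $\mu$ to choose from.) In short, both proofs rest on the same two pillars, Theorem~\ref{flat} and Theorem~\ref{inj}, but yours isolates the one-line substitution that actually does the work.
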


The proof is different from the proof of Theorem \ref{my1}.

\begin{proof}
By assumption, $xy \in k(p,q)$, so $xy \in k(p,q) \cap k[x,y] = k[p,q]$, by Theorem \ref{flat}.

Therefore, $xy=H(p,q)$, for some $H=H(T_1,T_2) \in k[T_1,T_2]$, 
where $k[T_1,T_2]$ is a polynomial ring in two variables $T_1,T_2$ over $k$.

By Proposition \ref{common zeros jc}, $p$ and $q$ have only finitely many common zeros, 
list them $C=\{(\lambda_0,\mu_0),\ldots,(\lambda_L,\mu_L)\}$, 
$\lambda_i, \mu_i \in k$, $0 \leq i \leq L$.

Take $\mu \notin \{\mu_0,\ldots,\mu_L\}$; this means that for every $\lambda \in k$, 
$(\lambda,\mu)$ is not a common zero of $p$ and $q$.

Write $p=p(x,y)=p_n(y)x^n+p_{n-1}(y)x^{n-1}+\cdots+p_1(y)x+p_0(y)$, 
where $p_i(y) \in k[y]$, $0 \leq i \leq n$, $p_n \neq 0$,
and write $q=q(x,y)=q_m(y)x^m+q_{m-1}(y)x^{m-1}+\cdots+q_1(y)x+q_0(y)$, 
where $q_i(y) \in k[y]$, $0 \leq i \leq m$, $q_m \neq 0$.

\textbf{Step 1:} Each of $\{p(x,\mu), q(x,\mu)\}$ is not identically zero.

Otherwise, if $p(x,\mu) \equiv 0$, then 
$p_n(\mu)x^n+p_{n-1}(\mu)x^{n-1}+\cdots+p_1(\mu)x+p_0(\mu) \equiv 0$, 

hence all the coefficients, $p_{i}(\mu)$, $0 \leq i \leq n$, are zero: 
$p_n(\mu)=p_{n-1}(\mu)=\cdots=p_1(\mu)=p_0(\mu)=0$.

For every $0 \leq i \leq n$, $p_i(y) \in k[y]$ has $\mu \in k$ as a root: $p_i(\mu)=0$,
so for every $0 \leq i \leq n$, $p_i=(y-\mu)a_i$, for some $a_i \in k[y]$.

Then $p$ becomes:
$p=(y-\mu)a_nx^n+(y-\mu)a_{n-1}x^{n-1}+\cdots+(y-\mu)a_1x+(y-\mu)a_0$
$=(y-\mu)(a_nx^n+a_{n-1}x^{n-1}+\cdots+a_1x+a_0)$.

Denote $r=a_nx^n+a_{n-1}x^{n-1}+\cdots+a_1x+a_0$,
so $p=(y-\mu)r$.

A direct computation of the Jacobian of $p=(y-\mu)r$ and $q$,
shows that $\Jac(p,q)=(y-\mu)\Jac(r,q)-rq_x$.

But $\Jac(p,q)=c \in k^*$, so $c=(y-\mu)\Jac(r,q)-rq_x$.

Take $y=\mu$ on both sides and get: $c=-r(x,\mu)q_x(x,\mu)$,
hence $q_x(x,\mu) \equiv e$, where $e$ is a nonzero scalar.

{}From $q=q(x,y)=q_m(y)x^m+q_{m-1}(y)x^{m-1}+\cdots+q_1(y)x+q_0(y)$,
we get that $q_x=mq_m(y)x^{m-1}+(m-1)q_{m-1}(y)x^{m-2}+\cdots+q_1(y)$,
with $mq_m(y) \neq 0$.

Then $q_x(x,\mu) \equiv e$ says that 
$mq_m(\mu)x^{m-1}+(m-1)q_{m-1}(\mu)x^{m-2}+\cdots+q_1(\mu) \equiv e$,

so 
$mq_m(\mu)x^{m-1}+(m-1)q_{m-1}(\mu)x^{m-2}+\cdots+q_1(\mu)-e \equiv 0$,
hence all the coefficient are zero:

$mq_m(\mu)=(m-1)q_{m-1}(\mu)=\ldots=2q_2(\mu)=(q_1(\mu)-e)=0$.

Then, $q_m(\mu)=q_{m-1}(\mu)=\ldots=q_2(\mu)=0$ and $q_1(\mu)=e$.

Therefore, $q_m(y)=(y-\mu)b_m, q_{m-1}(y)=(y-\mu)b_{m-1},\ldots,q_2(y)=(y-\mu)b_2$,

for some $b_m=b_m(y),b_{m-1}=b_{m-1}(y),\ldots,b_2=b_2(y) \in k[y]$.

For $q_1(\mu)-e=0$ we get $(q_1-e)(\mu)=0$, so $q_1-e=(y-\mu)b_1$,

for some $b_1=b_1(y) \in k[y]$, hence $q_1=(y-\mu)b_1+e$.

Then $q$ becomes 
$q=(y-\mu)b_mx^m+(y-\mu)b_{m-1}x^{m-1}+\cdots+(y-\mu)b_2x^2+((y-\mu)b_1+e)x+q_0$,
so
$q=(y-\mu)b_mx^m+(y-\mu)b_{m-1}x^{m-1}+\cdots+(y-\mu)b_2x^2+(y-\mu)b_1x+ex+q_0$
$=(y-\mu)(b_mx^m+b_{m-1}x^{m-1}+\cdots+b_2x^2+b_1x)+ex+q_0$.

Denote $s=b_mx^m+b_{m-1}x^{m-1}+\cdots+b_2x^2+b_1x$,
hence, $q=(y-\mu)s+ex+q_0$.

Summarizing, we have $p=(y-\mu)r$ and $q=(y-\mu)s+ex+q_0$, 
where $e \in k^*$, $q_0 \in k[y]$, $r,s \in k[x,y]$, 
and $\mu \in k$ has the property that for every $\lambda \in k$, 
$(\lambda,\mu)$ is not a common zero of $p$ and $q$.

If we take $y=\mu$ in $p$ and $q$ we obtain:
$p(x,\mu)=(\mu-\mu)r(x,\mu)=0$ 
and 
$q(x,\mu)=(\mu-\mu)s(x,\mu)+ex+q_0(\mu)=ex+q_0(\mu)$.

The polynomial $ex+q_0(\mu) \in k[x]$ has $x$-degree one,
so it has a root $\tilde{\lambda} \in k$:
$\tilde{\lambda}=-\frac{q_0(\mu)}{e} \in k$.

Therefore, $(\tilde{\lambda},\mu)$ is a common zero of $p$ and $q$:
$p(\tilde{\lambda},\mu)=(\mu-\mu)r(\tilde{\lambda},\mu)=0$ 
and 
$q(\tilde{\lambda},\mu)=(\mu-\mu)s(\tilde{\lambda},\mu)+e(\tilde{\lambda})+q_0(\mu)=0$.

This contradicts our choice of $\mu$ having the property that for every $\lambda \in k$, 
$(\lambda,\mu)$ is not a common zero of $p$ and $q$.


\textbf{Step 2:} Each of $\{p(x,\mu), q(x,\mu)\}$ is not identically a nonzero constant.

Otherwise, if $p(x,\mu) \equiv c$, for some $c \in k^*$,
then $p(x,\mu)-c \equiv 0$, hence we consider the Jacobian pair $(p-c,q)$
instead of the Jacobian pair $(p,q)$ and same arguments as in step 1
show that $p(x,\mu) \equiv c$ is impossible.

Having step 1 and step 2 we conclude: $\deg_x(p(x,\mu)) \geq 1$ and $\deg_x(q(x,\mu)) \geq 1$.

In $xy=H(p,q)$ substitute on both sides $y$ by $\mu$, 
hence $\mu x=H(p(x,\mu),q(x,\mu))$, 
which shows that $k[x]=k[p(x,\mu),q(x,\mu)]$.

(The efforts in step 1 and step 2 were to make sure there are no problematic cases
where $p(x,\mu),q(x,\mu) \in k$).

Define $g: k \to k^2$, $g: x \mapsto g(x)=(p(x,\mu),q(x,\mu))$.
(Here $g_1(x)=p(x,\mu)$ and $g_2(x)=q(x,\mu)$).

We have just seen that $k[x]=k[p(x,\mu),q(x,\mu)]=k[g_1(x),g_2(x)]$,
so by definition \ref{def embedding} $g$ is an embedding.

Then Proposition \ref{embedding} implies that $g$ is injective.

It is clear that our given 
$f: k^2 \to k^2$, $f: (x,y) \mapsto (p(x,y),q(x,y))$
is injective on the line $l$, $l: {y=\mu}$.

Indeed, $f \big|_l: l \to k^2$, 
$f \big|_l (x,\mu) \mapsto (p(x,\mu),q(x,\mu))$
is exactly $g$, which is injective.

By Theorem \ref{inj} $f$ is an automorphism.

\end{proof}

We hope that our ideas presented in this note will contribute to solving the two-dimensional 
Jacobian Conjecture.

\section{Acknowledgements}
I would like to thank Laurent Moret-Bailly for answering \cite{mo}.
Also, I would like to thank MSE and MO users that answered my questions over the years.


\bibliographystyle{plain}

\end{document}